\title{Chordality properties and hyperbolicity on graphs}
\author{A. Martínez-Pérez\footnote{{The author was partially supported by MTM-2012-30719.}}}
\newtheorem{definicion}{Definition}[section]
\newtheorem{prop}[definicion]{Proposition}
\newtheorem{lema}[definicion]{Lemma}
\newtheorem{obs}[definicion]{Remark}
\newtheorem{teorema}[definicion]{Theorem}
\begin{document}

\maketitle

\begin{abstract} Let $G$ be a graph with the usual shortest-path metric. A graph is $\delta$-hyperbolic if for every geodesic triangle $T$, any side of $T$ is contained in a $\delta$-neighborhood of the union of the other two sides. A graph is chordal if every induced cycle has at most three edges. In this paper we study the relation between the hyperbolicity of the graph and some chordality properties which are natural generalizations of being chordal. We find chordality properties that are weaker and stronger than being $\delta$-hyperbolic. Moreover, we obtain a characterization of being hyperbolic on terms of a chordality property on the triangles.
\end{abstract}

\begin{footnotesize}
Keywords:  Infinite graph, geodesic, Gromov hyperbolic, chordal.
\end{footnotesize}

\begin{footnotesize}
MSC:  Primary: 05C63; 05C75. Secondary: 05C38; 05C12.
\end{footnotesize}

\section{Introduction}

The theory of Gromov hyperbolic spaces was introduced by M. Gromov for the study of finitely generated groups (see \cite{Gr}). Since then, this theory has been developed from a geometric point of view to the extent of making hyperbolic spaces an important class of metric spaces to be studied on their own (see, for example, \cite{B-H,Bu-Bu,BS,G-H,Vai}). 
In the last years, Gromov hyperbolicity has been intensely studied in graphs (see \cite{BRS2,BRS,BRSV,CPRS,CRSV,CDEHV,K50,Ha,MRSV,PeRSV,PRSV,PRT1,PRT2,PT,RS,RSTY,RSVV,Si,T}). Gromov hyperbolicity, specially in graphs, has found applications in different areas such as phylogenetics (see \cite{DHH,DMT}), complex networks (see \cite{CMN,KP,Sha1,Sha2}) or the secure transmission of information and virus propagation on networks (see \cite{K21,K22}).

Given a metric space $(X,d)$,  a \emph{geodesic} from $x\in X$ to $y\in X$ is an isometry, $\gamma$, from a closed interval $[0,l]\subset \mathbb{R}$ to $X$ such that $\gamma(0)=x$, $\gamma(l)=y$. We will also call geodesic to the image of $\gamma$. $X$ is a geodesic metric space if for every $x,y\in X$ there exists a geodesic joining $x$ and $y$; any of these geodesics will be denoted as $[xy]$ although this notation is ambiguous since geodesics need not be unique.

By a \emph{metric graph} we mean a graph $G$ equipped with a length metric by considering every edge $e\in E(G)$ as isometric to an interval $[0,l_e]$. Thus, the interior points of the edges are also considered points in $G$. Then, for any pair of points in $x,y \in G$,  the distance $d(x,y)$ will be the length of the shortest path in $G$ joining $x$ and $y$. In this paper we will assume that the graphs are connected and locally finite (i.e. each ball intersects a finite number of edges) and there is no restriction on the length of the edges. In this case, the metric graph is a geodesic metric space.

There are several definitions of Gromov $\delta$-hyperbolic space  which are equivalent although the 
constant $\delta$ may appear multiplied by some constant (see \cite{BS}). We are going to use the characterization of 
Gromov hyperbolicity for geodesic metric spaces given by the Rips condition on the geodesic triangles. 
If $X$ is a geodesic metric space and $x_1,x_2,x_3\in X$, the union
of three geodesics $[x_1 x_2]$, $[x_2 x_3]$ and $[x_3 x_1]$ is called a
\emph{geodesic triangle} and will be denoted by $T=\{x_1,x_2,x_3\}$. $T$ is $\delta$-{\it thin} 
if any side of $T$ is contained in the
$\delta$-neighborhood of the union of the two other sides.  
The space $X$ is $\delta$-\emph{hyperbolic} if every geodesic triangle in $X$ is $\delta$-thin. We
denote by $\delta(X)$ the sharp hyperbolicity constant of $X$, i.e.
$\delta(X):=\inf\{\delta \, | \, \text{every triangle in $X$ is $\delta$-thin}\}.$ We say that $X$ is \emph{hyperbolic} if $X$ is
$\delta$-hyperbolic for some $\delta \geq 0$. A triangle with two
identical vertices is called a ``bigon''.

A graph $G$ is \emph{chordal} if every induced cycle has at most three edges. In \cite{BKM}, the authors prove that chordal graphs are hyperbolic  giving an upper bound for the hyperbolicity constant. In \cite{WZ}, Wu and Zhang extend this result for a generalized version of chordality. They prove that $k$-chordal graphs are hyperbolic where a graph is $k$-chordal if every induced cycle has 
at most $k$ edges. In \cite{B}, the authors define the more general properties of being $(k,m)$-edge-chordal and $(k,\frac{k}{2})$-path-chordal and prove that every $(k,m)$-edge-chordal graph is hyperbolic and that every hyperbolic graph is $(k,\frac{k}{2})$-path-chordal. Herein, we continue this work and define being $\varepsilon$-densely $(k,m)$-path-chordal and $\varepsilon$-densely $(k,m)$-path-chordal obtaining that (see Theorems \ref{Th: edge-path}, \ref{Th: path-hyp} and \ref{Th: hyp-path})

\[(k,m)\mbox{-edge-chordal} \Rightarrow  \varepsilon\mbox{-densely } (k,m)\mbox{-path-chordal} \Rightarrow \delta\mbox{-hyperbolic} \]
and
\[\delta\mbox{-hyperbolic} \Rightarrow  \varepsilon\mbox{-densely } k\mbox{-path-chordal} \Rightarrow k\mbox{-path-chordal}\]

We also provide examples showing that all these implications are strict, this is, the converse is not true. 

Moreover, we give a characterization of hyperbolicity in terms of a chordality property on the triangles: Theorem \ref{Th: carac} states that
a metric graph $G$ is $\delta$-hyperbolic if and only if $G$ is $\varepsilon$-densely $(k,m)$-path-chordal on the triangles.

The properties and implications studied in this paper are summarized in Figure \ref{Figure: Chordal}.

\begin{figure}[ht]
\centering
\includegraphics[scale=0.4]{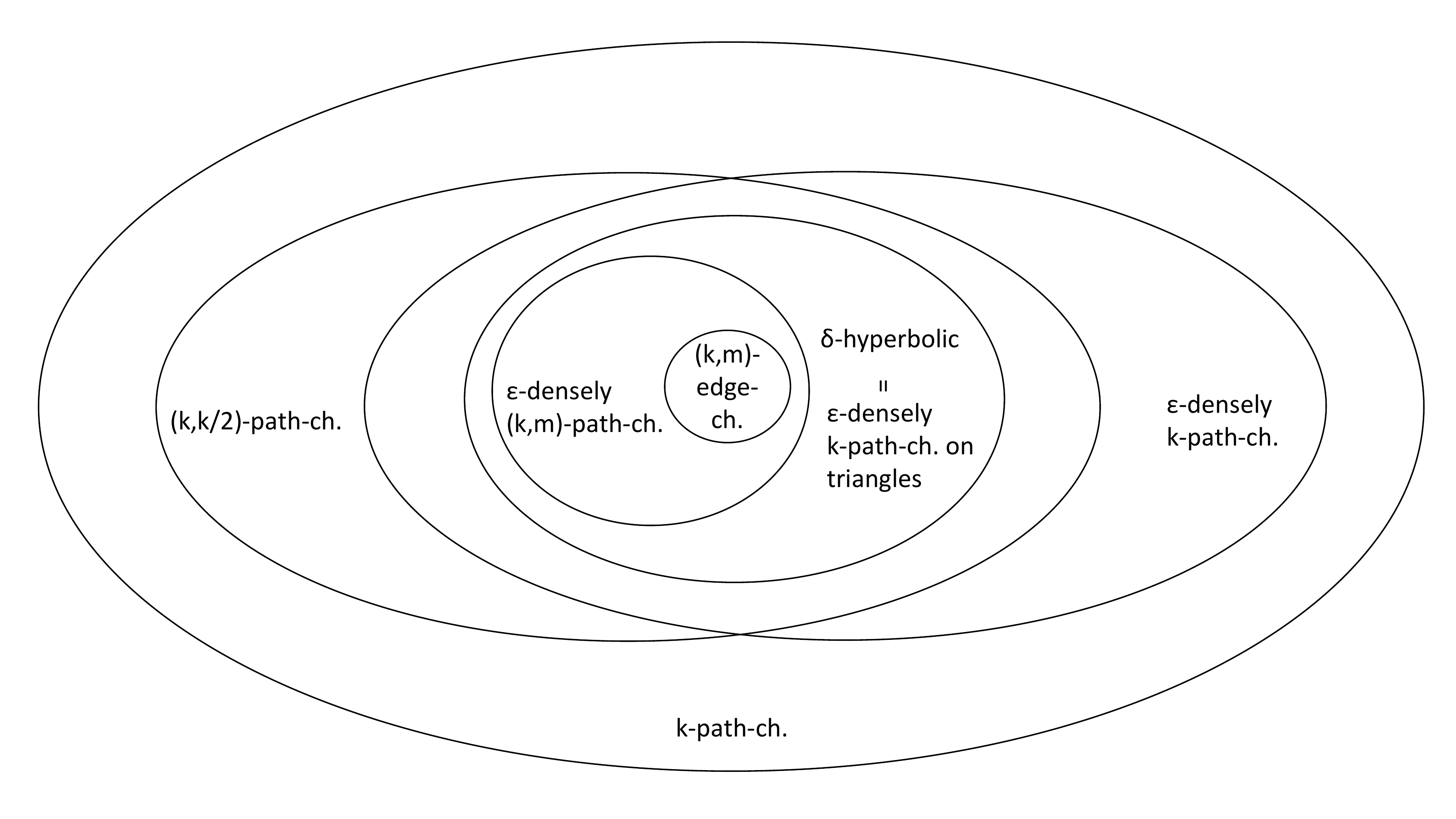}
\caption{Chordality properties and $\delta$-hyperbolicity on graphs.}
\label{Figure: Chordal}
\end{figure}

\section{$\varepsilon$-densely path-chordal graphs}

Consider a metric graph $(G,d)$. By a cycle in a graph we mean a simple closed curve, this is, a path defined by a sequence of vertices which are all different except for the first one and the last one which are the same. A shortcut in a cycle $C$ is a path $\sigma$ joining two vertices $p,q$ in $C$ such that $L(\sigma)<d_C(p,q)$ where $d_C$ denotes the length metric on $C$.  

\begin{definicion} A shortcut $\sigma$ in a cycle $C$ is \emph{strict} if $\sigma \cap C=\{p,q\}$. In this case, we say that $p$, $q$ are 
\emph{shortcut vertices} in $C$ associated to $\sigma$. 
\end{definicion}

Let us recall two definitions from \cite{B}:

Given two constants $k,m \geq 0$, a metric graph $G$ is $(k,m)$-\emph{edge-chordal} if for any cycle $C$ in $G$ with length $L(C)\geq k$ there exists an edge-shortcut $e$ with length $L(e)\leq m$. The graph $G$ is \emph{edge-chordal} if there
exist constants $k,m\geq 0$ such that $G$ is $(k,m)$-edge-chordal. Notice that for a graph with edges of length 1, being chordal is equivalent to being $(4,1)$-edge-chordal and being $k$-chordal in the sense of \cite{WZ} is equivalent to being $(k+1,1)$-edge-chordal.

A metric graph $G$ is $(k,\frac{k}{2})$-\emph{path-chordal} if for any cycle $C$ in $G$ with $L(C)\geq k$ there exists a
shortcut $\sigma$ of $C$ such that $L(\sigma)\leq \frac{k}{2}$. Notice that in \cite{B} this is called simply $k$-path-chordal. However, new definitions are introduced herein and this property has been renamed. 

Given a metric space $(X,d)$ and any $\varepsilon>0$, a subset $A\subset X$ is $\varepsilon$-\emph{dense} if for every $x\in X$ there exist some $a\in A$ such that $d(a,x)<\varepsilon$.

\begin{definicion} A metric graph $(G,d)$ is \emph{$\varepsilon$-densely $(k,m)$-path-chordal} if for every cycle $C$ with length $L(C)\geq k$, there exist strict shortcuts $\sigma_1,...,\sigma_r$ with $L(\sigma_i)\leq m$ and such that their associated shortcut vertices define an $\varepsilon$-dense subset in $(C,d_C)$.
\end{definicion}

\begin{teorema}\label{Th: edge-path} If a metric graph $G$ is $(k,m)$-edge-chordal, then it is $\varepsilon$-densely $(k,m)$-path-chordal. 
\end{teorema}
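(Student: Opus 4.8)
The plan is to build the required family of strict shortcuts by decomposing $C$ with the edge-shortcuts supplied by $(k,m)$-edge-chordality, applied not once but repeatedly to a shrinking family of sub-cycles. First I would record a simple but crucial observation: if $e$ is an edge of $G$ whose endpoints $p,q$ are vertices of $C$ and $e\not\subseteq C$, then $e\cap C=\{p,q\}$ (two distinct edges of a graph meet only at common vertices), so $e$ is automatically strict; moreover $e$ is a strict shortcut of $C$ precisely when $L(e)<d_C(p,q)$, and if this fails then $d_C(p,q)\le L(e)\le m$. Thus every short edge between vertices of $C$ is either a genuine strict shortcut of $C$ or joins two points that are $m$-close in $(C,d_C)$.

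Next I would set up the decomposition. Start with the single cycle $C$ and repeatedly select a current sub-cycle $C'$ with $L(C')\ge k$ and apply $(k,m)$-edge-chordality to it, obtaining an edge-shortcut $f$ of $C'$ with $L(f)\le m$. The key structural point is that every sub-cycle produced this way has all of its vertices on $C$: we only ever insert chords joining vertices that already lie on $C$, so by induction $f$ joins two vertices $p,q\in V(C)$. Hence $f$ is a chord of $C$ in the sense of the observation above, and splitting $C'$ along $f$ into the two sub-cycles (arc of $C'$ from $p$ to $q$) together with $f$ yields two genuine simple cycles, each strictly shorter than $C'$ since $L(f)<d_{C'}(p,q)$. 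I would then argue termination from local finiteness: all vertices involved lie in a fixed ball of radius $L(C)$, which meets only finitely many edges, so only finitely many chords are available and the process stops with every remaining sub-cycle of length $<k$.

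Finally I would extract density. Let $\sigma_1,\dots,\sigma_r$ be those inserted chords that are strict shortcuts of $C$, and let $S$ be the set of their endpoints. Given $x\in C$, the point $x$ lies on a maximal sub-arc-of-$C$ side $\beta$ of one of the final sub-cycles $P$, and $L(\beta)\le L(P)<k$; the endpoints of $\beta$ are chord-endpoints, so each belongs either to $S$ or to the endpoints of the remaining ``non-shortcut'' chords. Since $x$ is within $L(\beta)/2<k/2$ of each endpoint of $\beta$ along $C$, and since any non-shortcut chord joins points that are only $m$-close in $(C,d_C)$, one reaches a point of $S$ within a controlled distance of $x$, yielding $\varepsilon$-density for a suitable $\varepsilon$ of order $k/2$, adjusted by $m$ to absorb the non-shortcut chords.

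The step I expect to be the main obstacle is precisely the discrepancy between the two cycles: $(k,m)$-edge-chordality only guarantees a shortcut of each sub-cycle $C'$, and such a shortcut need not be a shortcut of $C$ itself. Controlling these ``false'' chords---showing that they can only join pairs of vertices that are already close along $C$, and hence that discarding them still leaves an $\varepsilon$-dense set of genuine shortcut vertices---together with the bookkeeping that guarantees termination and that every point of $C$ ends up near a retained shortcut vertex, is where the real work lies.
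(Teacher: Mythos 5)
Your overall strategy (recursive decomposition of $C$ along edge-shortcuts, with all chord-endpoints staying on $V(C)$, and termination via local finiteness) is sound, and you correctly identify the crux: an edge-shortcut of a sub-cycle $C'$ need not obviously be a shortcut of $C$ itself. But your way of handling that crux does not work. In the density-extraction step you keep only those chords that happen to be strict shortcuts of $C$ (their endpoints forming $S$) and claim that, since every discarded chord joins two points that are $m$-close in $(C,d_C)$, ``one reaches a point of $S$ within a controlled distance of $x$''. There is no argument there: if the arc $\beta\ni x$ of a final sub-cycle is bounded by endpoints of discarded chords, the $m$-closeness of a discarded chord's endpoints only hands you the \emph{other endpoint of the same discarded chord}, not a point of $S$; nothing forces a chain of discarded chords to terminate at a genuine shortcut of $C$, and the length of such a chain is not uniformly bounded, so no $\varepsilon$ of order $k/2+O(m)$ follows. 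As written, your proof establishes $\varepsilon$-density of \emph{chord-endpoints}, not of shortcut vertices of $C$, which is what the definition requires.

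The missing idea --- exactly the one the paper invokes when it asserts that ``$x'y'$ is also an edge-shortcut in $C$'' --- is that your discarded case is vacuous: every chord inserted by the process is automatically a strict shortcut of $C$. This follows from a monotonicity lemma: if $C''$ is obtained from a cycle $C'$ by replacing one of the two arcs between $p$ and $q$ by a chord $f$ with $L(f)<d_{C'}(p,q)$, then $d_{C''}(a,b)\le d_{C'}(a,b)$ for all $a,b$ on the surviving arc, because any path through $f$ is strictly shorter than the corresponding path through the deleted arc. Applying this inductively along your decomposition tree gives $d_{C''}\le d_C$ on the vertices of every sub-cycle $C''$, hence any edge-shortcut $\sigma$ of $C''$ satisfies $L(\sigma)<d_{C''}\le d_C$ between its endpoints, i.e.\ it is a shortcut of $C$ (and strict, since an edge meets $C$ only at its endpoints, as you observed). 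With this lemma your argument closes cleanly with $\varepsilon=k/2$ and no $m$-correction at all. The paper's proof is the same idea with lighter bookkeeping: it fixes a point $p\in C$, always keeps only the sub-cycle containing $p$ (so the monotonicity is needed only along a single nested chain of cycles, not a whole binary tree), and stops as soon as some shortcut vertex lands in $B_C(p,k/2)$.
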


\begin{proof} Clearly, being $(k,m)$-edge-chordal implies being $(k,m)$-path-chordal. Thus, it suffices to check that there is some 
$\varepsilon>0$ such that,  on every cycle $C$ with length at least $k$, the shortcut vertices associated to the edge-shortcuts with length at most $m$ are $\varepsilon$-dense. 

Let $\varepsilon:=\frac{k}{2}$ and let $p$ be any point in $C$. Since $G$ is $(k,m)$-edge-chordal, there is an edge $xy$ with $L(xy)\leq m$ which is a shortcut in $C$. If either $x$ or $y$ is in $B_C(p,\frac{k}{2})$, we are done. Otherwise, let us define a cycle $C_1$ by joining the path in $C$ from $x$ to $y$ containing $p$ and the edge $xy$. Since $x,y\notin B_C(p,\frac{k}{2})$, $C_1$ has length at least $k$ and there is another edge, $x'y'$, with $L(x'y')\leq m$ which defines a shortcut in $C_1$. Since $xy$ is an edge, $x',y'\in C$ and $x'y'$ is also an edge-shortcut in $C$. If either $x'$ or $y'$ is in 
$B_C(p,\frac{k}{2})$, we are done. This process can be repeated and, since the graph is finite, we will finally obtain an edge-shortcut in $C$  such that one of its vertices is contained in $B_C(p,\frac{k}{2})$ finishing the proof.
\end{proof}

The converse is not true. Let $P_3$ be the path graph with (adjacent) vertices $v_1, v_2, v_3$, and $G$ the Cartesian product graph
$G=\mathbb{Z}\Box P_3$ with $L(e)=1$ for every $e\in E(G)$. As it was shown in \cite{B}, $G$ is not edge-chordal although it is $\frac{5}{2}$-hyperbolic and $(5,\frac{5}{2})$-path chordal.

Let us see that $G$ is 3-densely $(5,2)$-path-chordal. Let $C$ be any cycle in $G$ with $L(C)\geq 5$ and let $(z_i,v_j)$ be any vertex in $C$. It suffices to check that there is a shortcut vertex in $B_C\big((z_i,v_j),\frac{5}{2}\big)$ associated to a shortcut with length at most 2.

Case 1. If $(z_{i-1},v_j)(z_i,v_j),(z_{i},v_j)(z_{i+1},v_j)\in C$, then it is immediate to see that there exist some vertex $(z_i,v_k)\in C$ with $k\neq j$. Then, the geodesic $[(z_i,v_j)(z_i,v_k)]$ contains a strict shortcut $\sigma$ with $L(\sigma)\leq 2$ such that $(z_i,v_j)$ is a shortcut vertex associated to $\sigma$.

Case 2. If $(z_{i},v_{j-1})(z_i,v_j),(z_{i},v_{j})(z_{i},v_{j+1})\in C$, then $j=2$ and either $(z_{i-1},v_{3})(z_i,v_3),(z_{i-1},v_{1})(z_i,v_1)\in C$ or $(z_{i},v_{3})(z_{i+1},v_3),(z_{i},v_{1})(z_{i+3},v_1)\in C$. Suppose that $(z_{i-1},v_{3})(z_i,v_3),(z_{i-1},v_{1})(z_i,v_1)\in C$ (the other case is equivalent by symmetry). Therefore, if the geodesic $\gamma=[(z_{i-1},v_{3})(z_{i-1},v_{1})]$ is contained in $C$, then 
$\sigma=(z_{i-1},v_{2})(z_i,v_2)$ is a shortcut with $L(\sigma)=1$ and $(z_i,v_2)$ is a shortcut vertex associated to $\sigma$. If $\gamma \not \subset C$, then it contains a strict shortcut $\sigma$ with $L(\sigma)\leq 2$ and either $(z_{i-1},v_{3})$ or $(z_{i-1},v_{1})$ is a shortcut vertex associated to $\sigma$. Thus, there is a shortcut vertex associated to a shortcut $\sigma$ with $L(\sigma)\leq 2$ in 
$B_C\big((z_i,v_j),\frac{5}{2}\big)$.

Case 3. $(z_i,v_j)$ is a corner vertex in $C$. Let us suppose that it is an upper-left corner, this is 
$(z_{i},v_{j-1})(z_i,v_j),(z_{i},v_j)(z_{i+1},v_j)\in C$ (the other cases are equivalent by symmetry).

Case 3.1.  If $(z_{i+1},v_j)(z_{i+1},v_{j-1})\in C$, then, since $L(C)\geq 5$, $\sigma=(z_{i},v_{j-1})(z_{i+1},v_{j-1})$ is a shortcut and 
$(z_{i},v_{j-1})$ is a shortcut vertex associated to $\sigma$. If $(z_{i},v_{j-1})(z_{i+1},v_{j-1})\in C$, then,  since $L(C)\geq 5$, 
$\sigma=(z_{i+1},v_{j})(z_{i+1},v_{j-1})$ is a shortcut and $(z_{i+1},v_{j})$ is a shortcut vertex associated to $\sigma$. Therefore, there is a shortcut vertex associated to a shortcut $\sigma$ with $L(\sigma)=1$ in $B_C\big((z_i,v_j),\frac{5}{2} \big)$

Case 3.2. Suppose $(z_{i+1},v_j)(z_{i+1},v_{j-1}), (z_{i},v_{j-1})(z_{i+1},v_{j-1})\notin C$. 

Case 3.2.1. If $(z_{i},v_{j-1})(z_{i},v_{j-2})\in C$, it is immediate to check that $(z_{i},v_{j-2})(z_{i+1},v_{j-2})\in C$. Now, if 
$(z_{i+1},v_{j-2})(z_{i+1},v_{j-1})\in C$, then $\sigma=(z_i,v_{j-1})(z_{i+1},v_{j-1})$ is a shortcut with $L(\sigma)=1$ and  $(z_i,v_{j-1})$ is a shortcut vertex associated to $\sigma$ and contained in $B_C\big((z_i,v_j),\frac{5}{2} \big)$. Otherwise, if $(z_{i+1},v_{j-2})(z_{i+1},v_{j-1})\notin C$, then $\sigma=[(z_{i+1},v_{j-2})(z_{i+1},v_{j})]$ is a strict shortcut with $L(\sigma)=2$ and $(z_{i+1},v_{j})$ is a shortcut vertex associated to $\sigma$ and contained in $B_C\big((z_i,v_j),\frac{5}{2} \big)$.

Case 3.2.2. If $(z_{i},v_{j-1})(z_{i},v_{j-2})\notin C$, then the only case left is that $(z_{i-1},v_{j-1})(z_{i},v_{j-1})\in C$. 
Then, either $(z_i,v_{j+1})\in C$ or $(z_i,v_{j-2})\in C$.

If $(z_i,v_{j+1})\in C$, then $\sigma=(z_{i},v_{j+1})(z_{i},v_{j})$  is a shortcut $\sigma$ with $L(\sigma)=1$ and $(z_i,v_j)$ is a shortcut vertex associated to $\sigma$. 

If $(z_i,v_{j-2})\in C$, then $\sigma=(z_i,v_{j-1})(z_i,z_{j-2})$ is a shortcut with $L(\sigma)=1$ and $(z_i,z_{j-1})$ is a shortcut vertex associated to $\sigma$ and contained in $B_C\big((z_i,v_j),\frac{5}{2} \big)$.

\begin{teorema}\label{Th: path-hyp} If $G$ is $\varepsilon$-densely $(k,m)$-path-chordal, then $G$ is $\delta$-hyperbolic. Moreover,  $\delta(G)\leq max\{\frac{k}{4}, \varepsilon+m\}$.
\end{teorema}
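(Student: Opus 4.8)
The plan is to verify the Rips thin-triangle condition directly, with $\delta=\max\{\tfrac{k}{4},\varepsilon+m\}$. Fix a geodesic triangle $T=\{x_1,x_2,x_3\}$ and a point $p$ on one of its sides, say $p\in[x_1x_2]$; writing $D:=d(p,[x_1x_3]\cup[x_2x_3])$, I must show $D\le\max\{\tfrac{k}{4},\varepsilon+m\}$. If $D=0$ there is nothing to prove, so assume $D>0$. The first step is to manufacture a genuine cycle on which the chordality hypothesis can be applied. Let $a$ (resp. $b$) be the last point of $[x_1x_2]$ before $p$ (resp. the first after $p$), travelling from $x_1$ (resp. from $x_2$), that lies on $[x_1x_3]\cup[x_2x_3]$; such points exist because $x_1\in[x_1x_3]$ and $x_2\in[x_2x_3]$, and by construction the open subsegment of $[x_1x_2]$ between $a$ and $b$ contains $p$ and meets $[x_1x_3]\cup[x_2x_3]$ only at its endpoints. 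Since $[x_1x_3]\cup[x_2x_3]$ is arcwise connected (both geodesics contain $x_3$), I may choose a simple arc $\beta$ from $a$ to $b$ inside it; then $C':=[a,b]_{[x_1x_2]}\cup\beta$ is a simple closed curve containing $p$ whose ``$\beta$-part'' lies entirely in $[x_1x_3]\cup[x_2x_3]$.

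Two elementary facts drive the estimate. First, $\beta$ joins $a$ to $b$, so $L(\beta)\ge d(a,b)$, whence $L(C')=d(a,b)+L(\beta)\ge 2\,d(a,b)$. Second, because $p$ lies on the geodesic segment $[a,b]$ with $a,b\in[x_1x_3]\cup[x_2x_3]$, one has $D\le\min\{d(p,a),d(p,b)\}\le\tfrac12 d(a,b)$. Combining these yields $D\le\tfrac14 L(C')$, so if $L(C')<k$ we immediately get $D<\tfrac{k}{4}$, settling this case.

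Now suppose $L(C')\ge k$. Here I invoke that $G$ is $\varepsilon$-densely $(k,m)$-path-chordal: applied to $C'$ it produces strict shortcuts of length $\le m$ whose shortcut vertices are $\varepsilon$-dense in $(C',d_{C'})$. Pick a shortcut vertex $v$ with $d_{C'}(p,v)<\varepsilon$ (so $d(p,v)<\varepsilon$) and let $w$ be the partner of $v$ along its shortcut $\sigma$, so that $d(v,w)\le L(\sigma)\le m$. The key observation is that $v$ and $w$ cannot both lie on the segment $[a,b]\subset[x_1x_2]$: that segment is geodesic, so the subarc of $C'$ joining them along it has length exactly $d(v,w)$, giving $d_{C'}(v,w)\le d(v,w)\le L(\sigma)$ and contradicting that $\sigma$ is a strict shortcut ($L(\sigma)<d_{C'}(v,w)$). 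Hence at least one of $v,w$ lies on $\beta\subset[x_1x_3]\cup[x_2x_3]$. If it is $v$, then $D\le d(p,v)<\varepsilon$; if it is $w$, then $D\le d(p,v)+d(v,w)<\varepsilon+m$. Either way $D\le\varepsilon+m$, completing this case and hence the bound $\delta(G)\le\max\{\tfrac{k}{4},\varepsilon+m\}$.

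I expect the main obstacle to be precisely the degeneracy handled in the first paragraph: a geodesic triangle need not be a simple cycle, as its sides may overlap or reintersect, so the cycle $C'$ must be carved out carefully (via the maximal $p$-segment avoiding the other sides together with a simple arc in their union) to guarantee that it is simple, contains $p$, and has its complementary arc confined to $[x_1x_3]\cup[x_2x_3]$. Once $C'$ is in hand the two cases are routine; the only other point meriting care is the observation that a geodesic admits no strict shortcut between two of its own points, which is exactly what forces the shortcut partner onto the other two sides.
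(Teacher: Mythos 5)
Your proof is correct, and its engine is the same as the paper's: a case split on length versus $k$; the bound $\frac{k}{4}$ in the short case; and, in the long case, the combination of $\varepsilon$-density with the observation that a geodesic admits no shortcut between two of its own points, which forces the partner endpoint of the shortcut onto the other two sides and gives $\varepsilon+m$. The genuine difference is where the chordality hypothesis is applied. The paper applies it directly to the triangle $T$ (splitting on $L(T)$), implicitly treating $T$ as a cycle; this step is not literally available for degenerate triangles whose sides overlap or re-intersect --- in a tree, for instance, every geodesic triangle is degenerate and there are no cycles, hence no shortcut vertices, at all, so the paper's proof as written has a gap there even though the conclusion is trivial in that case. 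Your construction of the simple closed curve $C'$ (the maximal subsegment of $[x_1x_2]$ around $p$ meeting the other two sides only at its endpoints $a,b$, closed up by a simple arc $\beta\subset[x_1x_3]\cup[x_2x_3]$), together with the extra inequality $D\le\frac{1}{4}L(C')$, makes the appeal to the cycle-based hypothesis legitimate in every case; it is exactly the degeneracy that the paper only confronts in Section~3, where chordality ``on the triangles'' is introduced to sidestep it. So your route proves the same bound $\delta(G)\le\max\{\frac{k}{4},\varepsilon+m\}$ with a more complete argument. The one point you could still record explicitly is that $C'$, being a simple closed curve in a metric graph, necessarily traverses whole edges and is therefore a cycle in the paper's sense (a closed vertex sequence) even when $a$ or $b$ is an interior point of an edge.
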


\begin{proof} Suppose that $G$ is $\varepsilon$-densely $(k,m)$-path-chordal. Consider any geodesic triangle $T=\{x,y,z\}$ in $G$. If $L(T)<k$, it follows that every side of the triangle has length at most $\frac{k}{2}$. Therefore, the hyperbolic constant is at most $\frac{k}{4}$. If $L(T)\geq k$, let us prove that $T$ is $(\varepsilon+m)$-hyperbolic. Consider any point $p\in T$ and let us assume that $p\in [xy]$. If $d(p,x)< \varepsilon+m$ or $d(p,y)< \varepsilon+m$, we are done. Otherwise, there is a shortcut vertex $x_i$ such that $d(x_i,p)<\varepsilon$ and a shortcut $\sigma_i$, with $x_i\in \sigma_i$ and $L(\sigma_i)\leq m$. Since $[xy]$ is a geodesic, $\sigma_i$ is not contained in $[xy]$ and $d(p, [xz]\cup [yz])<\varepsilon+m$.
\end{proof}

The converse is not true. To build an example of a $\delta$-hyperbolic graph which is not $\varepsilon$-densely $(k,m)$-path-chordal let us recall here the construction of the \emph{hyperbolic approximation} of a metric space introduced by S. Buyalo and V. Schroeder in \cite{BS}. The hyperbolic approximation of a metric space is a special kind
of hyperbolic cone, see \cite{BoS}, which is defined in general for non-necessarily bounded metric spaces. 

A subset $A$ in a metric space $Z$ is called \emph{r-separated},
$r>0$, if $d(a,a')\geq r$ for any distinct $a,a'\in A$. Note that
if $A$ is maximal with this property, then the union $\cup_{a\in
A} B_r(a)$ covers $Z$.

A \emph{hyperbolic approximation} of a metric space $Z$ is a graph
$X$ which is defined as follows. Fix a positive $r\leq
\frac{1}{6}$ which is called the \emph{parameter} of $X$. For
every $k\in \mathbb{Z}$, let $A_k\in Z$ be a maximal
$r^k$-separated set. For every $a\in A_k$, consider the ball
$B(a,2r^k)\subset Z$. 
Let us fix  the set $V$ as the union, for $k\in
\mathbb{Z}$, of the set of balls $B(a,2r^k)$, $a\in A_k$. 
 Let us denote the corresponding ball simply by $B(v)$. 

Let $V$ be the vertex set of the graph $X$.
Vertices $v,v'$ are connected by an edge if and only if they
either belong to the same level, $V_k$, and the closed balls
$\bar{B}(v),\bar{B}(v')$ intersect, $\bar{B}(v)\cap
\bar{B}(v')\neq \emptyset$, or they lie on neighboring levels
$V_k,V_{k+1}$ and the ball of the upper level, $V_{k+1}$, is
contained in the ball of the lower level, $V_k$.

An edge $vv'\subset X$ is called \emph{horizontal} if its
vertices belong to the same level, $v,v'\in V_k$ for some $k\in
\mathbb{Z}$. Other edges are called \emph{radial}. Consider the
path metric on $X$ for which every edge has length 1. 
There is a natural level function $l:V \to \mathbb{Z}$ defined by $l(v)=k$
for $v\in V_k$. 

Note that any (finite or infinite) sequence $\{v_k\}\in V$ such
that $v_kv_{k+1}$ is a radial edge for every $k$ and such that the level
function $l$ is monotone along $\{v_k\}$, is the vertex sequence
of a geodesic in $X$. Such a geodesic is called \emph{radial}.

\begin{prop}\cite[Proposition 6.2.10]{BS}\label{Prop: hyp} A hyperbolic approximation of any metric space is a
geodesic $3$-hyperbolic space.
\end{prop}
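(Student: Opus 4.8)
The plan is to verify the Rips thin-triangle condition directly, first by understanding the shape of geodesics in $X$ and then by exploiting the tree-like way in which the levels $V_k$ are organized. Since $X$ is a graph with unit edges, I would begin by reducing to the case where the three vertices of a triangle are graph-vertices, absorbing the error from interior points of edges (at most $\tfrac12$ per point) into the final constant. Two elementary facts set everything up. First, the level function $l$ changes by at most $1$ across each edge, so it is $1$-Lipschitz and $|l(v)-l(w)|\le d(v,w)$. Second, every vertex has a parent: if $v=B(a,2r^k)$ with $a\in A_k$, then by maximality the $r^{k-1}$-balls about $A_{k-1}$ cover $Z$, so $a\in B(b,r^{k-1})$ for some $b\in A_{k-1}$, and since $2r^k+r^{k-1}\le 2r^{k-1}$ (because $2r\le 1$) we get $B(v)\subset B(u)$ for the vertex $u\in V_{k-1}$ centered at $b$; thus $vu$ is a radial edge. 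Iterating, each vertex is the top of a descending radial ray, which by the observation recalled above is a geodesic.

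The second step is a distance estimate. The key auxiliary lemma I would prove is a \emph{projection} property: if $v,v'\in V_k$ are joined by a horizontal edge, then their chosen parents $u,u'\in V_{k-1}$ are either equal or again joined by a horizontal edge. This is a direct computation with the triangle inequality on the centers—the separation/covering constants give $d(\text{centers of }u,u')< 2r^{k-1}+4r^{k}=2r^{k-1}(1+2r)\le 4r^{k-1}$, so $\bar B(u)\cap\bar B(u')\neq\emptyset$ whenever $r\le\tfrac12$. Using this, I would show that between any two vertices $v,w$ there is a geodesic of \emph{radial-horizontal-radial} form: descend radially from $v$ and from $w$ to a common \emph{confluence level} $p(v,w)$, defined as the highest level at which ancestors of $v$ and $w$ have intersecting closed balls, and join the two ancestors there by a horizontal path whose length is bounded by a universal constant. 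This yields a distance formula $d(v,w)=\bigl(l(v)-p(v,w)\bigr)+\bigl(l(w)-p(v,w)\bigr)+h(v,w)$, with $h(v,w)$ the bounded horizontal correction.

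With the distance formula in hand, hyperbolicity reduces to the tree-like behaviour of the confluence levels. I would prove the ultrametric-type inequality $p(x,z)\ge\min\{p(x,y),p(y,z)\}$ for any three vertices, which expresses that ancestor balls merge in a nested, tree-like fashion as the level decreases. Combined with the distance formula, this forces each side of a geodesic triangle to run, outside a bounded central region, alongside a side that shares the same descending radial ray, so every point of one side lies within a universally bounded distance of the union of the other two. Tracking the additive constants—the bounded horizontal runs at the confluence level together with the at-most-$\tfrac12$ reductions for interior points—then yields the sharp value $\delta=3$.

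I expect the main obstacle to be the second step: establishing that geodesics may be taken in radial-horizontal-radial form with a horizontal part of \emph{bounded} length, and extracting the exact numerical constant. The projection lemma shows that a horizontal edge persists as a single horizontal edge one level down, but controlling the total length of a horizontal run in a genuine geodesic—equivalently, showing that long horizontal detours can always be shortened by descending one level—is precisely where the hypothesis $r\le\tfrac16$ is needed, and where the precise bound governing $\delta=3$ must be squeezed out.
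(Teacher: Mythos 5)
The paper itself offers no proof of this proposition: it is quoted directly from Buyalo--Schroeder \cite[Proposition 6.2.10]{BS}, together with the key Lemma 6.2.6 (the paper's Lemma \ref{Lemma: Geod}). So your sketch can only be judged on its own merits, and while its overall route --- normal-form geodesics that descend radially and meet near a confluence level, plus a tree-like inequality on confluence levels --- is indeed the standard one for hyperbolic approximations, it has a genuine gap, located exactly at the step you yourself flag as the ``main obstacle.''

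The projection lemma you prove is too weak to carry the second step. Knowing that horizontal adjacency persists one level down gives no mechanism for shortening horizontal runs: replacing a horizontal edge at level $k$ by ``descend, horizontal edge, ascend'' makes a path longer, so nothing yet forces geodesics into radial--horizontal--radial form with bounded horizontal part, and your distance formula $d(v,w)=\bigl(l(v)-p(v,w)\bigr)+\bigl(l(w)-p(v,w)\bigr)+h(v,w)$ remains unproved --- and it is the whole content of the statement. What is actually needed, and where $r\le\frac{1}{6}$ enters, is the stronger \emph{common parent} property: if $v,v'\in V_k$ are joined by a horizontal edge, their centers satisfy $d(a,a')\le 4r^k$, and if $u$ is the parent of $v$ with center $b$, $d(a,b)<r^{k-1}$, then every $x\in B(a',2r^k)$ has $d(x,b)<2r^k+4r^k+r^{k-1}=r^{k-1}+6r^k\le 2r^{k-1}$; hence $B(v')\subset B(u)$ and $v'u$ is itself a radial edge. (Your computation, using only $r\le\frac{1}{2}$, concludes merely that the two parents' balls meet, which is strictly weaker.) With the common-parent property, two consecutive horizontal edges $vv'$, $v'v''$ can be replaced by the radial pair $vu$, $uv''$ through the parent $u$ of $v'$, and ``horizontal then radial-down'' can be replaced by ``radial-down then at most one horizontal''; iterating these moves is precisely the proof that any two vertices are joined by a geodesic with at most one horizontal edge, lying at the lowest level. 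Without it your plan does not close. Two further points would still need work even after that: your ultrametric inequality $p(x,z)\ge\min\{p(x,y),p(y,z)\}$ is only true up to an additive loss of $1$ (the two ancestor balls at the minimal level need not meet; only their parents must, again by a $4r\le 1$ computation), and the final extraction of the constant $\delta=3$ is asserted rather than derived.
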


The following technical lemma is very useful to understand the geodesics in $X$.

\begin{lema}\cite[Lemma 6.2.6]{BS}\label{Lemma: Geod} Any vertices $v,v'\in V$ can be connected in $X$ by a
geodesic which contains at most one horizontal edge. If there is
such an edge, then it lies on the lowest level of the geodesic.
\end{lema}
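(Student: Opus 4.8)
The plan is to prove the statement by \emph{normalizing} an arbitrary geodesic through length-preserving surgery moves, inducting on a complexity measure built from the horizontal edges and their heights. Two structural facts drive everything, and I would establish them first. \textbf{Descent is always available}: given $v\in V_k$ with $B(v)=B(a_v,2r^k)$, maximality of $A_{k-1}$ means the balls $B(c,r^{k-1})$, $c\in A_{k-1}$, cover $Z$, so some $c$ has $d(c,a_v)\le r^{k-1}$; since $r\le\tfrac16<\tfrac12$ we get $2r^k\le r^{k-1}$ and hence $B(v)\subset B(c,2r^{k-1})$, producing a radial edge from $v$ down to the vertex $z\in V_{k-1}$ centered at $c$. \textbf{One level down absorbs horizontal neighbours}: if $u,w\in V_k$ are joined by a horizontal edge then $d(a_u,a_w)\le 4r^k$, and choosing $c\in A_{k-1}$ with $d(c,a_w)\le r^{k-1}$ gives $d(c,a_u)\le r^{k-1}+4r^k$; the containment $B(u)\subset B(c,2r^{k-1})$ then reduces exactly to $6r^k\le r^{k-1}$, i.e. to the hypothesis $r\le\tfrac16$. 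This single inequality is what the whole argument turns on.

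These two facts supply the surgery moves. First, a geodesic can contain no \emph{up--down bump}: if three consecutive vertices $w\to s\to w'$ have $s$ one level higher, then $B(s)\subset B(w)\cap B(w')$ is nonempty, so $\bar B(w)\cap\bar B(w')\ne\emptyset$ and $w,w'$ are joined by a horizontal edge, letting one replace the length-$2$ detour by a single edge --- impossible for a geodesic. Hence around every horizontal edge the geodesic is \emph{valley-shaped}. Second, the \emph{pair move}: if $u\to^{h}w\to^{h}s$ are two consecutive horizontal edges at level $k$, the absorbing vertex $z\in V_{k-1}$ above satisfies $B(u),B(w),B(s)\subset B(z)$, so $u\to^{r}z\to^{r}s$ has the same length $2$ but two fewer horizontal edges. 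Third, the \emph{push-down move}: a horizontal edge $u\to^{h}w$ adjacent to a descending radial edge $w\to^{r}t$ can be rerouted as $u\to^{r}z\to^{h}t$, where $z\in V_{k-1}$ absorbs $u,w$ and $d(a_z,a_t)<3r^{k-1}\le 4r^{k-1}$ forces $zt$ to be horizontal; this preserves both the length and the number of horizontal edges but lowers that edge by one level.

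With these moves I would argue as follows. Fix a geodesic $[vv']$ minimizing the number of horizontal edges, and among those the sum of their levels, and let $j_{\min}$ be its lowest level. If some horizontal edge lay strictly above $j_{\min}$, the path would descend below it on one side; the no-bump fact forbids an intervening ascent, so the adjacent edge on that side is a descending radial edge and the push-down move would lower the total height, contradicting minimality. Hence every horizontal edge sits at level $j_{\min}$. If there were two of them, then between them the path stays at level $\ge j_{\min}$ with both endpoints at $j_{\min}$; since it cannot rise and come back (no up--down bump) it runs horizontally at level $j_{\min}$, so two horizontal edges are adjacent and the pair move removes both, again contradicting minimality. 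Therefore $[vv']$ has at most one horizontal edge, necessarily on its lowest level, which is the assertion.

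I expect the main obstacle to be the \emph{bookkeeping at the interface between horizontal and radial edges} together with guaranteeing termination: one must check that each surgery genuinely preserves the geodesic property (length), that the no-bump sub-lemma is strong enough to force monotonicity on both sides of every horizontal edge, and that the lexicographic complexity strictly decreases so the normalization halts at exactly the claimed normal form. The metric estimates themselves are routine once the threshold $r\le\tfrac16$ has been isolated as the governing inequality; the real care lies in the combinatorial organization of the induction.
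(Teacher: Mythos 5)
First, a caveat: the paper does not prove this lemma at all --- it is quoted verbatim as \cite[Lemma 6.2.6]{BS} --- so your proposal has to stand on its own rather than be matched against an in-paper argument. Your overall scheme (the two ball-containment facts governed by $6r\leq 1$, the no-bump observation, the pair move and the push-down move, then normalization of a geodesic chosen extremal for a lexicographic complexity) is the standard and correct route. But one step is asserted where it genuinely fails, and you yourself flagged it: the sentence ``the no-bump fact forbids an intervening ascent, so the adjacent edge on that side is a descending radial edge.'' The no-bump fact only excludes a \emph{one-vertex} local maximum of the level function (an ascent immediately followed by a descent). It does not exclude the geodesic ascending right after the horizontal edge and returning later through a \emph{plateau}, i.e.\ a maximal-level stretch of two or more vertices whose edges are horizontal edges at a strictly higher level; nothing in your lexicographic minimality forbids such edges a priori, since minimality only bites once a surgery move has been exhibited, and in the ascending case you have exhibited none. (The sentence also silently omits the case where the adjacent edge is horizontal, which is handled by the pair move, not push-down.) The repair is an extremal choice you never make: run the argument on a horizontal edge at the \emph{maximal} level $k_{\max}$ among the horizontal edges of the chosen geodesic. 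If the adjacent edge on the side where the path descends below $k_{\max}$ were ascending, follow the geodesic to its first return to level $k_{\max}$ (it must return, since it reaches $j_{\min}<k_{\max}$ on that side); the excursion strictly above $k_{\max}$ attains a maximum, which is either a one-vertex maximum (a bump, impossible on a geodesic) or a plateau containing a horizontal edge at level $>k_{\max}$, contradicting maximality of $k_{\max}$. Hence the adjacent edge is horizontal or descending radial, your moves apply, and minimality gives the contradiction; once all horizontal edges are forced to level $j_{\min}$, your final paragraph about two such edges does go through, because at that stage plateaus above $j_{\min}$ really are excluded.

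A second, smaller flaw: in the push-down move you argue that $zt$ is horizontal because $d(a_z,a_t)<3r^{k-1}\leq 4r^{k-1}$. That is not the definition of a horizontal edge; what is required is $\bar{B}(z)\cap\bar{B}(t)\neq \emptyset$, and in a general metric space $Z$ (hyperbolic approximations are defined for arbitrary $Z$) two centers at distance $3r^{k-1}$ can perfectly well have disjoint closed $2r^{k-1}$-balls --- take $Z$ to consist of exactly two points at that distance. The conclusion is nevertheless true and follows in one line from what you already established: $B(w)\subset B(z)$ and $B(w)\subset B(t)$, so $B(z)\cap B(t)\supset B(w)\neq\emptyset$; and $z\neq t$, since $z=t$ would make $u\to t$ a single edge shortcutting the geodesic $u\to w\to t$. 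With these two repairs (and the routine check that your lexicographic minimum exists, which is clear since the relevant quantities are integers bounded below along geodesics of fixed length), the proof is complete.
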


Now, let us consider the following hyperbolic approximation of the Euclidean real line.

Let $r=\frac{1}{6}$ and $A_k:=\{mr^k \, : \, m\in \mathbb{Z}\}$. By Proposition \ref{Prop: hyp}, the resultant hyperbolic approximation $G$ is $\delta$-hyperbolic. Let us see that $G$ not $\varepsilon$-densely $(k,m)$-path-chordal. 

Consider the cycle $C_n\in G$ defined as follows. First, consider the vertices $v_0,v_1,...,v_{6^n}$ in $V_0$ such that $B(v_i)=B(i,2)$ for every $0\leq i\leq 6^n$ and the horizontal edges $v_{i-1}v_i$ for every $1\leq i\leq 6^n$.
Also, for every $0 < k < n$ consider the vertices $v'_{k},v''_k$ in $V_{-k}$ such that $B(v'_{k})=B(6^k, 2\cdot 6^k)$ and $B(v''_k)=B(6^n-6^k, 2\cdot 6^k)$. Then, consider the radial edges $v_0v'_1$, $v_{6^n}v''_1$ and $v'_{k-1}v'_k$, $v''_{k-1}v''_k$ for every $1<k<n$.
Finally, to complete the cycle, consider the horizontal edge $v'_{n-1}v''_{n-1}$.

Let $\gamma$ be the path in $C_n$ from $v_0$ to $v_{6^n}$ given by the radial geodesic from $v_0$ to $v'_{n-1}$, the horizontal edge $v'_{n-1}v''_{n-1}$ and the radial geodesic from $v''_{n-1}$ to $v_{6^n}$. Let us prove that $\gamma$ is a geodesic. 

Notice that $L(\gamma)=2n-1$. Suppose that $\gamma'$ is a geodesic in $X$ from $v_0$ to $v_{6^n}$ with $L(\gamma')<2n-1$. By Lemma \ref{Lemma: Geod} we may assume that $\gamma'$ contains at most one horizontal edge  which lies in the lowest level, $-m$. Moreover, we may assume that $\gamma'$ begins with $m$ radial edges from level 0 to level $-m$, then it may have one horizontal edge or not, and then it has $m$ radial edges from level $-m$ to level 0.

Since  $L(\gamma')<2n-1$, then either there is no horizontal edge or $m<n-1$. 

Now, notice that for any horizontal edge $vv'$ with $B(v)=B(i\cdot 6^m, 2\cdot 6^m)$ and $B(v')=B(j\cdot 6^m, 2\cdot 6^m)$ and $i<j$, then 
$\sup\{x\in B(v')\}\leq i\cdot 6^m+6\cdot 6^m=sup\{x\in B(v)\}+4\cdot 6^m$. 
Also, for any radial edge $vv'$ with $B(v)=B(i\cdot 6^{k-1}, 2\cdot 6^{k-1})$ and $B(v')=B(j\cdot 6^k, 2\cdot 6^k)$, then $\sup\{x\in B(v)\}\leq \sup\{x\in B(v')\}\leq i\cdot 6^{k-1}+22\cdot 6^{k-1}=sup\{x\in B(v)\}+20\cdot 6^{k-1}$.  

Then, if $m=n-1$, $\sup_{w\in \gamma'}\{x\in B(w)\}\leq 2+\sum_{k=1}^m 20\cdot 6^{k-1}=2+20\frac{6^m-1}{5}=4\cdot 6^m-2<6^n$ leading to contradiction.

If $m<n-1$, $\sup_{w\in \gamma'}\{x\in B(w)\}\leq 2+\sum_{k=1}^m 20\cdot 6^{k-1} + 4\cdot 6^m=2+20\frac{6^m-1}{5}+4\cdot 6^m=8\cdot 6^m-2<6^n$ leading also to contradiction. 

Thus, $\gamma$ is a geodesic. Therefore, for any vertex $w\in C_n$ with $l(w)<0$, if $w$ is a shortcut vertex associated to a shortcut 
$\sigma$, then $\sigma$ is a path from $w$ to some vertex $w'\in C_n$ with $l(w')=0$ and $L(\sigma)=l(w)$. 

Let $v\in C_n$ such that $l(v)=-n+1$ and let $\varepsilon=\frac{n-1}{2}$. Then, for every $w\in C_n$ such that $d_{C_n}(v,w)<\varepsilon$, $l(w)\leq -\frac{n-1}{2}$, and for every shortcut $\sigma$ such that $w$ is a shortcut vertex associated to $\sigma$,  $L(\sigma)\geq \frac{n-1}{2}$. Thus,  $G$ is not $\varepsilon$-densely $(k,m)$-path-chordal for any constants $\varepsilon,k,m$.

\begin{definicion} A metric graph $(G,d)$ is $\varepsilon$-densely $k$-path-chordal if for every cycle $C$ with length $L(C)\geq k$, there exist strict shortcuts $\sigma_1,...,\sigma_r$  such that their associated shortcut vertices define an $\varepsilon$-dense subset in $(C,d_C)$.
\end{definicion}

\begin{teorema}\label{Th: hyp-path} If $G$ is $\delta$-hyperbolic, then $G$ is $\varepsilon$-densely $k$-path-chordal.
\end{teorema}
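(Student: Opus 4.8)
The plan is to reduce the statement to a purely local claim and then to recover density by a cycle-shrinking argument modeled on the proof of Theorem~\ref{Th: edge-path}. First I would fix the constants. Since $G$ is $\delta$-hyperbolic, by \cite{B} it is $(k_0,\frac{k_0}{2})$-path-chordal for some $k_0=k_0(\delta)$, so every cycle of length at least $k_0$ carries at least one shortcut of length at most $\frac{k_0}{2}$. I would take $k:=k_0$ and choose $\varepsilon$ generously (of order $k_0$, with $\varepsilon\geq \frac{k_0}{2}$). Because a cycle $C$ is a finite subgraph, it has only finitely many shortcut vertices; hence it suffices to prove the \emph{local claim}: for every $p\in C$ there is a strict shortcut of $C$ one of whose endpoints lies within $d_C$-distance $\varepsilon$ of $p$. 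Collecting, for each such endpoint, one witnessing strict shortcut then yields a finite family $\sigma_1,\dots,\sigma_r$ whose vertex set is $\varepsilon$-dense in $(C,d_C)$.

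A reduction I would isolate at the outset is that every shortcut contains a strict shortcut: if $\sigma$ joins $u,w$ and meets $C$ in the ordered points $u=p_0,\dots,p_s=w$, then $L(\sigma)=\sum_i L(\sigma_i)$ while the triangle inequality in $(C,d_C)$ gives $d_C(u,w)\le\sum_i d_C(p_{i-1},p_i)$, so $L(\sigma)<d_C(u,w)$ forces $L(\sigma_i)<d_C(p_{i-1},p_i)$ for some $i$, and that $\sigma_i$ is strict. For the local claim at $p$ I would then run the shrinking procedure of Theorem~\ref{Th: edge-path}: starting from $C_0=C$, use $(k_0,\frac{k_0}{2})$-path-chordality and the reduction to get a strict shortcut $\sigma_0$ with endpoints $x_0,y_0$; if one lies in $B_C(p,\varepsilon)$ we are done, and otherwise both are at $d_C$-distance $>\varepsilon$ from $p$, so replacing the arc from $x_0$ to $y_0$ \emph{not} containing $p$ by $\sigma_0$ produces a strictly shorter cycle $C_1$ that still contains the $C$-arc around $p$ (of length $>2\varepsilon\geq k_0$). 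Iterating and using finiteness, the process should terminate with a shortcut endpoint inside $B_C(p,\varepsilon)$.

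The main obstacle is precisely the step that was automatic in Theorem~\ref{Th: edge-path}: there the replaced arc was a single edge, so any shortcut of $C_1$ automatically had its endpoints on $C$. Here $\sigma_0$ is a path with interior vertices, so a shortcut of $C_1$ may end on $\sigma_0$, or may cross the discarded arc, and hence need not restrict to a strict shortcut of $C$ with a nearby endpoint; shortcuts can legitimately cross the cycle with their two endpoints far apart in $d_C$. This is where $\delta$-hyperbolicity must do the real work, and I expect to control it by keeping each replacement shortcut \emph{close to $C$}: using $\delta$-thinness of the geodesic bigon formed by $\sigma_i$ and the arc it replaces, one shows that any endpoint created on $\sigma_i$ is within a bounded $d_C$-distance of an honest vertex of $C$, which converts a shortcut of $C_1$ landing on $\sigma_0$ back into a strict shortcut of $C$ whose endpoint is still within $\varepsilon$ of $p$. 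Fixing the resulting $\varepsilon$ in terms of $\delta$ and $k_0$ (and, for graphs carrying long edges, the lengths of the edges appearing in $C$) would complete the argument.
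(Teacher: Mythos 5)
Your reduction to a local claim and your observation that every shortcut contains a strict one are both sound, and your diagnosis of where the difficulty lies is exactly right --- but the proposal stops at precisely that point, and the tool you invoke to close the gap is not available. The ``bigon'' formed by the replacement shortcut $\sigma_0$ and the arc of $C$ it replaces is not a geodesic bigon: $\sigma_0$ is merely a path shorter than the arc, and the arc itself need not be a geodesic in $G$, so $\delta$-thinness says nothing about this configuration; hyperbolicity only controls geodesic triangles, and nothing in your construction produces one. Moreover, even granting some control of that kind, thinness would give proximity in the metric $d$ of $G$, whereas what you must produce is a shortcut vertex of the \emph{original} cycle $C$ at $d_C$-distance less than $\varepsilon$ from $p$; points that are close in $d$ can be arbitrarily far apart in $d_C$ (this distinction is the whole point of the definition, and is exactly what drives the paper's counterexamples). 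Finally, a shortcut of $C_1$ that crosses the discarded arc does contain a strict shortcut of $C$ by your own reduction, but the endpoints of that strict sub-shortcut can land anywhere on $C$, so ``still within $\varepsilon$ of $p$'' is not preserved. The central step of your iteration is therefore unproved, and I do not see how to complete it along these lines.

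For comparison, the paper's proof is direct and does not pass through $(k_0,\frac{k_0}{2})$-path-chordality or any cycle-shrinking at all. It sets $\varepsilon=2\delta$, $k=4\delta$ and argues locally: if some $x\in C$ had no shortcut vertex in $B_C(x,2\delta)$, then the two subarcs of $C$ of length $2\delta$ on either side of $x$, say $[xa]$ and $[xb]$, are genuine geodesics, and every geodesic from $x$ to a far point of $C$ must contain $[xa]$ or $[xb]$; a connectedness-plus-compactness argument then yields a single point $p$ joined to $x$ by two geodesics $\gamma_1\supset [xa]$ and $\gamma_2\supset [xb]$. Applying $\delta$-thinness to the honest geodesic triangle $[xa]\cup [ap]\cup \gamma_2$ at the midpoint of $[xa]$ gives a contradiction. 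That is the step where hyperbolicity ``does the real work,'' and it requires manufacturing a genuine geodesic triangle out of the cycle --- the ingredient your proposal is missing.
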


\begin{proof} Let $\varepsilon=2\delta$ and $k=4\delta$. 

Let $C$ be any cycle with $L(C)= 4\delta$. Then, if there is a shortcut vertex in $C$, we are done. Suppose there are no shortcut vertices in $C$. Therefore, there exist two points $x,y\in C$ such that $d(x,y)=\frac{L(C)}{2}=2 \delta$ and the cycle $C$ is formed by two different geodesics, $\gamma_1,\gamma_2$ joining $x,y$. Consider the bigon given by $\gamma_1\cup \gamma_2$. Since $G$ is $\delta$-hyperbolic, for every point $p$ in $\gamma_1$, $d(p,\gamma_2)<\delta$. However, if $m$ is the middle point in $\gamma_1$, then it is clear that $d_C(m,\gamma_2)=\delta$. Therefore, there must be a shortcut $\sigma$ joining $\gamma_1$ and $\gamma_2$ leading to contradiction.  

Now, let $C$ be any cycle with $4\delta < L(C)$. Let us suppose that there is a point $x\in C$ such that there are no shortcut vertices in $B_C(x,2\delta)$. Let $a,b$ be the points in $C$ such that $d_C(a,x)=2\delta=d_C(b,x)$. Since there are no shortcut vertices in $B_C(x,2\delta)$, the restriction of the cycle, $C\cap B_C(x,2\delta)$, defines two geodesics $[xa]$ and $[xb]$ with length $2\delta$ contained in $C$.

Also, since there are no shortcut vertices in $B_C(x,2\delta)$, for every point $y$ in $C\backslash B_C(x,2\delta)$, the geodesic $[xy]$ either contains $[xa]$ or $[xb]$. In fact, since $C\backslash B_C(x,2\delta)$ is connected, there exist two sequences $p_n,q_n$ with $d_C(p_n,q_n)\leq \frac{1}{n}$ and such that $[xp_n]$ contains $[xa]$ and $[xq_n]$ contains $[xb]$. Then, by compactness, taking a subsequence if necessary, we may assume that $p_n$ and $q_n$ converge to a point $p$ in $C\backslash B_C(x,2\delta)$. Thus, by convergence, there are two geodesics, 
$\gamma_1$, $\gamma_2$ from $x$ to $p$ where $\gamma_1=[xa]\cup [ap]$ and $\gamma_2=[xb]\cup [bp]$. See Figure \ref{Figure: Shortcut}. Notice that $L([ap])=L([bp])=d(xp)-2\delta=:\ell$. Now consider the geodesic triangle defined by 
$[xa]\cup [ap]\cup \gamma_2$ and let $m$ be the middle point in $[ax]$. Since $\gamma_1$ is a geodesic, it is trivial to see that $d(m,p)=\delta+\ell$. Also, since the graph is $\delta$-hyperbolic, there is a point $z$ in $[ap]\cup \gamma_2$ such that $d(m,z)<\delta$. In fact, since there are no shortcut vertices in  $B_C(x,2\delta)$, $z\in [ap]\cup [bp]$. However, this implies that $d(m,p)< \delta + d(z,p) \leq \delta + \ell=d(m,p)$ leading to contradiction. 
\end{proof}

\begin{figure}[ht]
\centering
\includegraphics[scale=0.4]{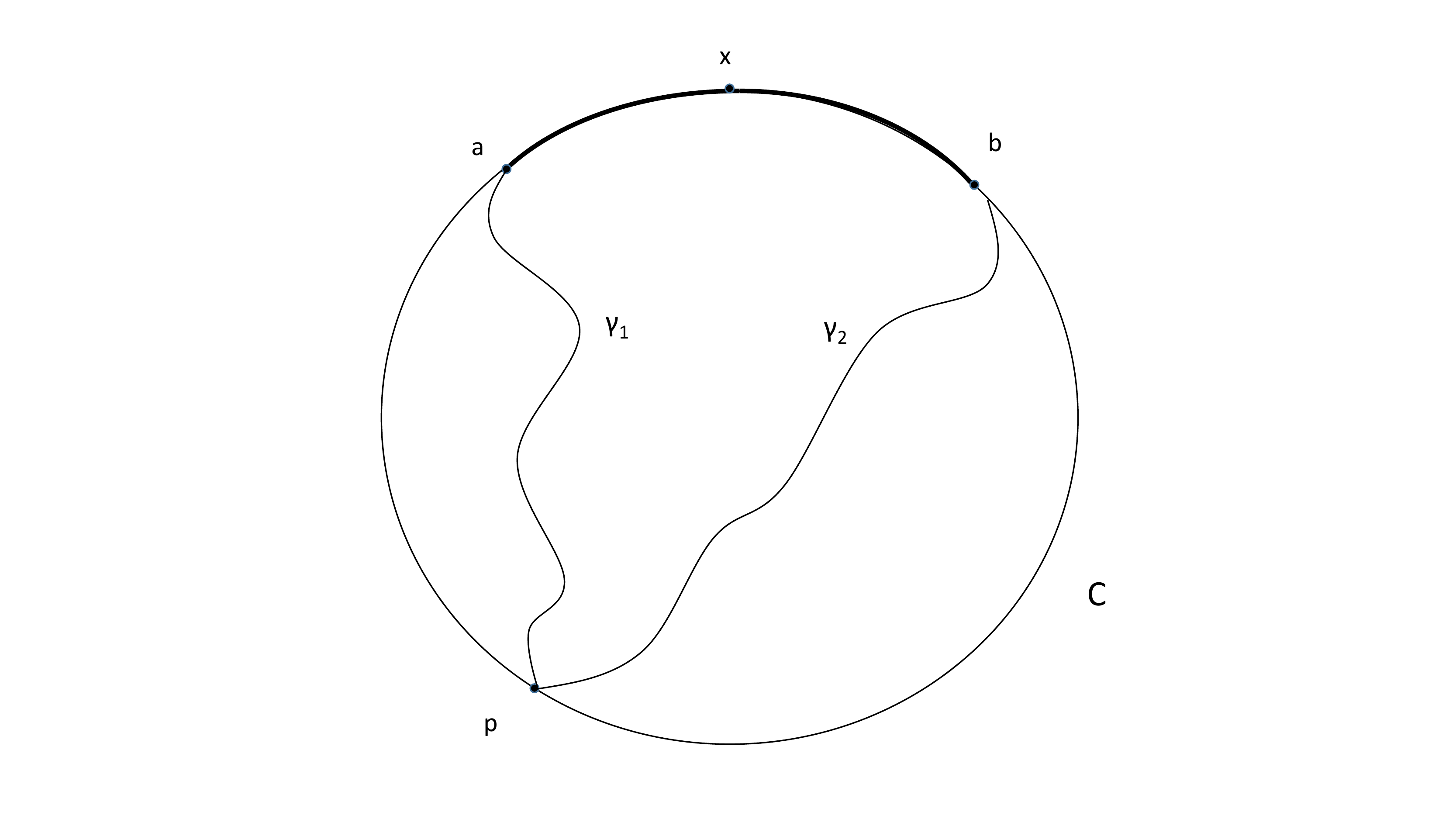}
\caption{If there are no shortcut vertices in $B_C(x,2\delta)$, then there exist a point $p$ and two geodesics $\gamma_1, \gamma_2$ from $x$ to $p$ such that $[xa]\subset \gamma_1$ and $[xb]\subset \gamma_2$.}
\label{Figure: Shortcut}
\end{figure}

\begin{obs} Being  $\varepsilon$-densely $k$-path-chordal does not imply being  $(k,\frac{k}{2})$-path-chordal (this is, $k$-path-chordal as defined in \cite{B}). Also, it does not imply being $\delta$-hyperbolic.

Consider the Cartesian product graph $G=\mathbb{N}\Box \mathbb{N}$ with $L(e)=1$ for every edge $e$ in $G$.

First, notice that $G$ is not $(k,\frac{k}{2})$-path-chordal for any $k\geq 5$. Consider any $n\geq k$ and let $C_n$ be the geodesic square defined by the vertices $(1,1),(n,1),(n,n),(1,n)$. Thus, $C_n$ is a cycle with length $L(C_n)=4n-4>k$. It is trivial to check that every shortcut $\sigma$ in $C_n$ must join two opposite sides of the square and therefore, $L(\sigma)\geq n-1>\frac{k}{2}$.

It is trivial to check that $G$ is not $\delta$-hyperbolic either.

Now, let us see that $G$ is $2$-densely 6-path chordal. Let $C$ be any cycle with $L(C)\geq 6$. Let $(p,q)$ be any vertex in $C$ and 
let us prove that either $(p,q)$ is a shortcut vertex or it is adjacent to a shortcut vertex.

Case 1. Let us suppose that $(p,q)$ and the two adjacent vertices in $C$ are aligned: $(p-1,q)(p,q), (p,q)(p+1,q)\in E(C)$ or 
$(p,q-1)(p,q), (p,q)(p,q+1)\in E(C)$. In both cases, let us see that $(p,q)$ is a shortcut vertex. Suppose $(p-1,q)(p,q)$ and $(p,q)(p+1,q)$ are edges in $C$. It is trivial to see that there exist some $q'\neq q$ such that $(p,q')\in C$. Then, there is a shortcut $\sigma$ in $C$ defined by the geodesic in $G$ joining $(p,q)$ and $(p,q')$. In particular, it is clear that $(p,q)$ is a shortcut vertex since $(p,q)(p,q+1),(p,q)(p,q-1) \notin E(C)$. A similar argument works when $(p,q-1)(p,q)$ and $(p,q)(p,q+1)$ are edges in $C$.

Case 2. Let us suppose that the three vertices are not aligned. Suppose $(p,q-1)(p,q)$ and $(p,q)(p+1,q)$ are edges in $C$ (i.e. $(p,q)$ is an upper-left corner in $C$). (Any other relative position of the adjacent vertices is equivalent to this one up to rotation of the cycle. Therefore, the argument also works.) In this case, $(p-1,q)(p,q)$ and $(p,q)(p,q+1)$ are not edges in $C$. 

Case 2.1. If there is a vertex $(p,q')$ in $C$ with $q'>q$, then the geodesic in $G$ from $(p,q')$ to $(p,q)$ is a shortcut and $(p,q)$ is a shortcut vertex since $(p,q)(p,q+1)\notin E(C)$. Also, if there is a vertex $(p',q)$ in $C$ with $p'<p$, then the geodesic in $G$ from $(p,q')$ to $(p,q)$ is a shortcut and $(p,q)$ is a shortcut vertex since $(p-1,q)(p,q)\notin E(C)$. 

Case 2.2. Suppose that $(p,q)$ is not a shortcut vertex.  
Let us see that either $(p,q-1)$ or $(p+1,q)$ is a shortcut vertex.  If $(p,q-2)(p,q-1)\in E(C)$, then $(p,q-1)$ is a shortcut vertex as we saw in Case 1 and we are done. Also, if $(p+1,q),(p+2,q)\in E(C)$, $(p+1,q)$ is a shortcut vertex and we are done. Otherwise, let us suppose that $(p,q-2)(p,q-1)$ and $(p+1,q),(p+2,q)$ and not edges in $C$. Clearly, $(p,q-1)(p+1,q-1)$ and $(p+1,q)(p+1,q-1)$ can not be simultaneously edges in $C$ since this would induce a cycle of length 4. Therefore, either $(p-1,q-1)(p,q-1)$  or $(p+1,q)(p+1,q+1)$ is an edge in $C$. Suppose $(p-1,q-1)(p,q-1)\in E(C)$ . Then, there is, necessarily, a vertex $(p,q')$ in $C$ with $q'\neq q,q-1$. Since $(p,q)$ is not a shortcut vertex, we can assume by Case 2.1 that $q'<q-1$. Hence, the geodesic in $G$ joining $(p,q-1)$ and $(p,q')$ is a shortcut and $(q,q-1)$ is a shortcut vertex since $(p,q-1)(p,q-2)\notin E(C)$. If $(p+1,q)(p+1,q+1)\in E(C)$ the argument is similar. In this case, there is a vertex $(p',q)$ in $C$ with  $p'>p+1$, the geodesic in $G$ joining $(p+1,q)$ with $(p',q)$ is a shortcut and $(p+1,q)$ is a shortcut vertex since $(p+1,q)(p+2,q)\notin E(C)$ finishing the proof.
\end{obs}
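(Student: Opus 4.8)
The plan is to exhibit a single graph witnessing both non-implications at once, and the integer grid $G=\mathbb{N}\Box\mathbb{N}$ with unit edges is the natural candidate: grids are ``flat'' and so cannot be hyperbolic, yet every cycle in a grid admits many short local detours, so one expects its shortcut vertices to be dense. I would split the verification into three independent parts: (i) $G$ is not $(k,\frac{k}{2})$-path-chordal for any $k\ge5$; (ii) $G$ is not $\delta$-hyperbolic for any $\delta$; and (iii) $G$ is $2$-densely $6$-path-chordal. Parts (i) and (ii) are the ``easy'' failures, while (iii) carries the real content.

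For (ii) I would produce arbitrarily fat geodesic triangles. Taking the triangle on the vertices $(1,1),(n,1),(1,n)$ and choosing the side $[(n,1)(1,n)]$ to be the staircase through $(n,n)$, the point $(n,n)$ lies on that side but is at distance $n-1$ from the union of the other two sides (its nearest points there being the corners $(n,1)$ and $(1,n)$). Since $n$ is arbitrary, no fixed $\delta$ makes every triangle $\delta$-thin, so $G$ is not hyperbolic; equivalently, $G$ is quasi-isometric to $\mathbb{R}^2$.

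For (i) I would take, for each $k\ge5$ and each $n\ge k$, the boundary square $C_n$ on the corners $(1,1),(n,1),(n,n),(1,n)$, of length $4n-4\ge k$, and show that every shortcut of $C_n$ must join two \emph{opposite} sides. The key observation is that the $\ell^1$ graph distance $d_G(p,q)$ coincides with the cycle distance $d_{C_n}(p,q)$ whenever $p,q$ lie on the same side or on two adjacent sides (a chord across a corner costs exactly as much as going around it), so no such chord is a shortcut; only a chord between opposite sides can shorten, and any such chord $\sigma$ satisfies $L(\sigma)\ge d_G(p,q)\ge n-1>\frac{k}{2}$. Hence no shortcut of length $\le\frac{k}{2}$ exists and $(k,\frac{k}{2})$-path-chordality fails.

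Part (iii) is where I expect to spend all the effort, and it is essentially a local analysis at each vertex $v=(p,q)$ of a cycle $C$ with $L(C)\ge6$, the goal being the stronger statement that $v$ is either itself a shortcut vertex or adjacent to one; together with the unit edge lengths this yields $2$-density, since then every point of $C$ lies within $d_C$-distance $<2$ of a shortcut vertex. The clean case is when the two edges of $C$ incident to $v$ are collinear (both horizontal, say): then the two vertical unit edges at $v$ are absent from $C$, the cycle (being closed) meets the column of $v$ in a second vertex $(p,q')$, and the vertical grid geodesic from $v$ to the nearest such vertex is a strict shortcut with $v$ as an endpoint, so $v$ is a shortcut vertex. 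The main obstacle is the corner case, where one incident edge is horizontal and one vertical; here $v$ itself may fail to be a shortcut vertex, so I would examine the shape of $C$ within distance $2$ of $v$, invoke the fact that a full $2\times2$ cell would close up into a $4$-cycle (impossible since $L(C)\ge6$) to prune configurations, and check that in each surviving configuration one of the two $C$-neighbours of $v$ is a collinear vertex and hence a shortcut vertex by the clean case. Throughout, the two delicate points are verifying that the exhibited shortcuts are \emph{strict}, i.e.\ meet $C$ only at their endpoints---which is exactly what the ``perpendicular edge not in $C$'' observations guarantee---and carrying out the finite case analysis exhaustively up to the symmetries of the grid.
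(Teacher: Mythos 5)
Your parts (i) and (ii) are sound and match the paper where the paper gives detail: the opposite-sides argument for the boundary square $C_n$ is exactly the paper's, and your explicit fat triangles $(1,1),(n,1),(1,n)$ with the staircase side through $(n,n)$ actually supply a proof for what the paper dismisses as ``trivial to check''. The clean (collinear) case of part (iii) is also correct, including the two delicate points you identify: the cycle must meet the column of $v$ a second time, and taking the nearest such vertex together with the absence from $C$ of the vertical edges at $v$ gives a \emph{strict} shortcut with $v$ as an endpoint.

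The gap is in your corner case, and it is a real one: your claimed conclusion of the pruned case analysis is false. Consider the $8$-cycle with vertices in cyclic order $(0,0),(0,1),(1,1),(1,2),(2,2),(2,1),(2,0),(1,0)$, and take $v=(1,2)$, an upper-left corner of $C$ (its $C$-edges go down to $(1,1)$ and right to $(2,2)$). One checks directly that $v$ is not a shortcut vertex: any strict shortcut from $v$ must exit through $(0,2)$ or $(1,3)$, and no vertex of $C$ can then be reached in fewer steps than its cycle distance (in particular $(1,0)$ cannot be reached strictly at all, since every $G$-neighbour of $(1,0)$ lies on $C$). There is no full $2\times 2$ cell, so your $4$-cycle pruning does not exclude this configuration; yet \emph{both} $C$-neighbours of $v$, namely $(1,1)$ and $(2,2)$, are corner vertices, so no neighbour is ``collinear and hence a shortcut vertex by the clean case''. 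The remark is still true here --- $(1,1)$ is a shortcut vertex via the strict shortcut edge $(1,1)(1,0)$, whose endpoints lie on $C$ but which is not an edge of $C$ --- but for a reason your outline never produces. This is precisely the content of the paper's Case 2.2: when both neighbours of $v$ turn, one first uses the hypothesis that $v$ itself is not a shortcut vertex (the paper's Case 2.1) to conclude that the column above $v$ and the row to its left avoid $C$, then uses the fact that $C$ must close up to find a vertex $(p,q')$ with $q'<q-1$ on the column of $v$ (or symmetrically on the row of $v$), and finally shows the corner neighbour $(p,q-1)$ (resp.\ $(p+1,q)$) is an endpoint of a strict vertical (resp.\ horizontal) shortcut because its continuing edge is missing from $C$. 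Note this argument is not local: it needs information about $C$ arbitrarily far from $v$, so no analysis confined to a distance-$2$ neighbourhood of $v$, however exhaustive, can close this case.
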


\begin{obs} Being $k$-path-chordal does not imply being $\varepsilon$-densely $k$-path-chordal.

Consider the graph $G_n$ defined as follows. Let $V(G_n):=\{0,1,...,n\}\times \{0,1,...,n\}$. Now, for every $0\leq p \leq n$ and every $1\leq q \leq n$, $(p,q-1)(p,q)\in E(G_n)$. Also, $(p-1,0)(p,0),(p-1,n)(p,n)\in E(G_n)$ for every $1\leq p\leq n$. Finally, if $p$ is odd, for every $1\leq  q \leq n-1$, $(p-1,q)(p,q)\in E(G)$ if and only if $q\equiv 0 \, mod(4)$ and if $p$ is even, for every $1\leq  q \leq n-1$, 
$(p-1,q)(p,q)\in E(G)$ if and only if $q\equiv 2\, mod(4)$.

Then, consider the graph $\mathbb{N}$ and let $G$ be the graph obtained by attaching to each vertex $n$ of 
$\mathbb{N}$ the vertex $(0,0)$ of $G_n$. Suppose every edge in $G$ has length 1. 

Let us see that the resulting graph $G$ is $11$-path-chordal. Let $C$ be any cycle in $G$ with $L(C)\geq 11$. Clearly, by construction, 
$C$ is contained in some $G_n$. Let $P_1:G_n \to \{0,1,...,n\}$ be such that $P_1(p,q)=p$. If $P_1(C)$ contains more than two points, say $p-1,p,p+1$, then there exist two vertices  $(p,q),(p,q')\in C$ with $q'\neq q$. Since $(p,q-1)(p,q)\in E(G_n)$ for every $1\leq q \leq n$, then the geodesic $[(p,q)(p,q')]$ defines a shortcut. On the other hand, if $P_1(C)$ contains only two vertices, say $p-1,p$, since $L(C)\geq 11$ there is some $q$ such that $(p-1,q)(p,q)$ is a shortcut in $C$.

Let us see that $G$ is not  $\varepsilon$-densely $k$-path-chordal for any $k>10$. Suppose $n\geq \max\{k,2\varepsilon\}$ and consider the cycle in $G_n$ given by the geodesic square with vertices $(0,0),(n,0), (n,n)$ and $(0,n)$. It suffices to check that the vertex $(0,q)$ is not a shortcut vertex for any $0\leq q \leq n$. First, notice that for every path $\sigma_1$ from $(0,q)$ to a vertex $(p,0)$ (respectively, a path $\sigma_2$ from $(0,q)$ to a vertex $(p,n)$), $L(\sigma_1)\geq p+q=d_C\big((0,q),(p,0)\big)$ (resp. $L(\sigma_2)\geq p+n-q=d_C\big((0,q),(p,n)\big)$) and therefore $\sigma_1$ (resp. $\sigma_2$) is not a shortcut. Now, consider any path $\sigma$ joining $(0,q)$ and $(n,q')$ with $0< q' <n$. 
If $\sigma$ contains any vertex $(p,0)$ or $(p,n)$ with $0<p<n$, then it is not a shortcut since there are no shortcuts from $(0,q)$ to $(p,0)$ or $(p,n)$ and from $(p,0)$ or $(p,n)$ to $(n,q')$.
Therefore, let us suppose that 
$\sigma\cap C=\{(0,q),(n,q')\}$. Also, notice that the distance between any pair of (consecutive horizontal) edges $(p-1,q_1)(p,q_1)$ and 
$(p,q_2)(p+1,q_2)$ for any $1\leq p\leq n$ and any $0< q_1,q_2 <n$ is at least 2. Then,  $L(\sigma)\geq n+2(n-1)>2n\geq d_C((0,q),(n,q'))$ and $\sigma$ is not a shortcut.
\end{obs}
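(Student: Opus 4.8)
The plan is to separate the two notions by exhibiting a single graph that is $k$-path-chordal for one fixed $k$ yet fails to be $\varepsilon$-densely $k$-path-chordal for \emph{every} choice of constants. The gap between the properties is that $k$-path-chordality asks only for \emph{one} strict shortcut on each long cycle, whereas the dense version asks that shortcut vertices be scattered $\varepsilon$-densely along the cycle. So I would hunt for a graph carrying a family of arbitrarily long cycles, each of which has a shortcut \emph{somewhere} but each of which also contains a shortcut-vertex-free arc whose length grows with the cycle. Once such an arc has length exceeding $2\varepsilon$, no fixed $\varepsilon$ can be dense, and since this happens for every $\varepsilon$ at once, the property fails outright.

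The construction I would use is a ``grid with sparse, staggered rungs.'' On the vertex set $\{0,\dots,n\}\times\{0,\dots,n\}$ I would keep every vertical column intact and keep the top and bottom rows intact, but delete almost all interior horizontal edges, leaving horizontal rungs only at every fourth row and alternating the admissible rows ($q\equiv 0$ or $q\equiv 2 \bmod 4$) according to the parity of the column index. Hanging one copy $G_n$ off the vertex $n$ of the ray $\bn$ forces every cycle to lie inside a single block $G_n$, so both properties may be checked block by block. The whole point of the staggering is quantitative: a path crossing the interior strip from the leftmost to the rightmost column can never run horizontally for long, because the usable rungs in adjacent columns sit two rows apart, so such a path must pay roughly two extra vertical units for each column it traverses.

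To verify $k$-path-chordality (with $k=11$) I would fix a cycle $C\subset G_n$ with $L(C)\ge 11$ and project it onto the first coordinate. If the image contains at least three values, two vertices of $C$ share a column and the vertical geodesic between them is a strict shortcut. If the image is a single pair of adjacent columns, then $L(C)\ge 11$ forces $C$ to span enough rows to contain one of the sparse rungs, and that rung is the required shortcut; the value $11$ is chosen precisely so a two-column cycle is guaranteed to be tall enough.

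To show the dense property fails I would take $C$ to be the boundary square with corners $(0,0),(n,0),(n,n),(0,n)$ and argue that no vertex $(0,q)$ of the left side is a shortcut vertex. A strict shortcut leaving $(0,q)$ must reconnect to $C$: reaching the top or bottom row gains nothing, since along the boundary those stretches are already geodesic, while reaching the right column forces a crossing of the staggered strip, whose length I would bound below by $n+2(n-1)>2n\ge d_C\big((0,q),(n,q')\big)$ using the per-column vertical penalty. Hence the entire left side, of length $n$, is free of shortcut vertices, and choosing $n>2\varepsilon$ defeats $\varepsilon$-density for every $\varepsilon$. The main obstacle is exactly this last estimate: one must exclude \emph{all} strict shortcuts from a left-side vertex, including crooked ones that re-enter and leave the interior, and the crux is the clean lower bound on the cost of traversing the rungs, which is where the specific $\bmod 4$ offset pattern earns its keep.
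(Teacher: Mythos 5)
Your proposal is correct and follows essentially the same route as the paper: the identical staggered-rung grid $G_n$ attached to the ray $\bn$, the same two-case projection argument (three columns versus two) for $11$-path-chordality, and the same boundary-square analysis with the per-column vertical penalty yielding the bound $n+2(n-1)>2n\ge d_C\big((0,q),(n,q')\big)$ to rule out shortcut vertices along the left side. No substantive differences to report.
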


\begin{obs} Being $(k,\frac{k}{2})$-path-chordal does not imply being $\varepsilon$-densely $k$-path-chordal. Also, it does not imply being $\delta$-hyperbolic.

Consider the graph $G_n$ defined as follows. Let $V(G_n):=\{0,1,...,n\}\times \{0,1,...,n\}$. Now, for every $0\leq p \leq n$ and every $1\leq q \leq n$, $(p,q-1)(p,q)\in E(G_n)$. Also, $(p-1,0)(p,0),(p-1,n)(p,n)\in E(G_n)$ for every $1\leq p\leq n$. Finally, if $p$ is odd, for every $1\leq  q \leq n-1$, $(p-1,q)(p,q)\in E(G)$ if and only if $q\equiv 0 \, mod(8)$ and if $p$ is even, for every $1\leq  q \leq n-1$,
$(p-1,q)(p,q)\in E(G)$ if and only if $q\equiv 4 \, mod(8)$. 

Let $S_n:=\sum_{k=0}^n\frac{1}{2^{k}}$ (notice that $1\leq S_n <2$ for every $n\in \mathbb{N}$) and let $L\big((p,q)(p+1,q)\big)=S_{p+q}=L\big((p,q)(p,q+1)\big)$.

Then, let us consider the graph $\mathbb{N}$ where every edge has length 1 and let $G$ be the graph obtained by attaching to each vertex $n$ in $\mathbb{N}$ the vertex $(0,0)$ of $G_n$. 

First, let us see that $G$ is $(36,18)$-path-chordal. Let $C$ be any cycle of $G$ with $L(C)\geq 36$.  Notice that each cycle is contained in some $G_n$. Moreover, it has a vertex $v=(p+1,q+1)$ such that $(p+1,q), (p,q+1)\in E(G)$ and both edges are contained in $C$ (i.e., $v$ is an upper-right vertex of $C$). Suppose also that $p$ is maximal. By construction, there is an horizontal edge in $G_n$, $(p,q')(p+1,q')$, with $0\leq q-q'\leq 7$. Therefore,  $[(p,q+1)(p,q')]\cup (p,q')(p+1,q')$ defines a path $\gamma$ in $G_n$ with $L(\gamma)<18$. It is immediate to check that this path contains a shortcut $\sigma$ with $L(\sigma)<18$.

Let us see that $G$ is not $\varepsilon$-densely $k$-path-chordal. Consider the cycle $C_n$ in $G_n$ defined by the geodesic square with vertices $(0,0),(n,0),(n,n)(0,n)$. First, notice that for every vertex $(0,q)$ with $0\leq q \leq n$ and every vertex $(p,0)$ with $0\leq p \leq n$, the unique geodesic in $G$ from $(0,q)$ to $(p,0)$ is given by the sequence $(0,q),(0,q-1),...,(0,0),(1,0),...,(p,0)$ and it is contained in $C_n$. Also, for every vertex $(0,q)$ with $0\leq q \leq n$ and every vertex $(p,n)$ with $0\leq p \leq n$, the path in $C_n$ given by the sequence $(0,q),(0,q+1),...,(0,n),(1,n),...,(p,n)$ is geodesic. Therefore, there are no shortcuts in $C_n$ from $(0,q)$ to any vertex $(p,0)$ or $(p,n)$ with $0\leq p \leq n$.

Given any $\varepsilon>0$ and $k\geq 36$ suppose any $n>k,2\varepsilon$ and any vertex $v$ in $C_n$ such that 
$v\in B_C(\frac{n}{2},\varepsilon)$. Since $n\geq 2\varepsilon$, $v=(0,q)$ for some $0<q<n$. 
Let us see that $(0,q)$ is not a shortcut vertex. 

Suppose $\sigma$ is a shortcut and $(0,q)$ is a shortcut vertex associated to $\sigma$. Since there are no shortcuts in $C_n$ from $(0,q)$ to any vertex $(p,0)$ or $(p,n)$,  we may assume that  $\sigma\cap C=\{(0,q),(n,q')\}$ for some $0<q'<n$. However,  by construction of $G_n$, this implies that $L(\sigma)\geq 4(n-1)+n>4n>d_C((0,q),(n,q'))$, leading to contradiction. Thus, there are no shortcut vertices in $ B_C(\frac{n}{2},\varepsilon)$.
\end{obs}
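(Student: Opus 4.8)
The plan is to produce one graph $G$ that is $(k,\frac{k}{2})$-path-chordal for a fixed $k$ while violating both conclusions, so the example must combine a genuinely two-dimensional coarse geometry (to kill hyperbolicity and $\varepsilon$-density) with short local shortcuts (to keep path-chordality). The natural candidate is a weighted ``brick-wall'' grid: on each block $G_n$ with vertex set $\{0,\dots,n\}^2$ I would keep every vertical edge and the full top and bottom rows, but insert interior horizontal rungs only in a staggered pattern (rows $\equiv0\pmod 8$ across odd column-gaps, rows $\equiv4\pmod 8$ across even ones), and I would assign each edge the length $S_{p+q}\in[1,2)$. Attaching each $G_n$ at $(0,0)$ to the vertex $n$ of a ray $\mathbb{N}$ gives $G$. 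The first thing to record is that every cycle of $G$ lies inside a single block, since distinct blocks share only the cut vertices of $\mathbb{N}$; this reduces everything to the blocks.

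For $(36,18)$-path-chordality I would fix a cycle $C$ with $L(C)\ge 36$, work inside its block $G_n$, and pick an upper-right corner $v=(p+1,q+1)$ of $C$ with $p$ maximal. The staggering guarantees a rung $(p,q')(p+1,q')$ with $0\le q-q'\le7$, so the path $\gamma$ that descends column $p$ from $(p,q+1)$ to $(p,q')$ and crosses that rung uses at most eight vertical edges and one horizontal edge, each of length $<2$, whence $L(\gamma)<18$. Comparing $L(\gamma)$ with the $C$-distance between its endpoints (the competing arc runs through the corner $v$, and maximality of $p$ together with $L(C)\ge 36$ makes it the longer one) shows $\gamma$ contains a strict shortcut of length $<18$, giving the property with $k=36$, $m=18$.

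For the failure of $\varepsilon$-densely $k$-path-chordality I would use the large geodesic squares $C_n$ with corners $(0,0),(n,0),(n,n),(0,n)$ and prove that no left-side vertex $(0,q)$ is a shortcut vertex. Shortcuts from $(0,q)$ to a top or bottom vertex are ruled out because the explicit boundary sequences are geodesics, so the only candidates run to the opposite side $(n,q')$. The crux is the length estimate $L(\sigma)\ge 4(n-1)+n>4n\ge d_C\big((0,q),(n,q')\big)$: crossing the $n$ interior column-gaps forces $n$ horizontal edges, while the $4$-row offset between consecutive rung-rows forces a vertical detour of at least $4$ across each of the $n-1$ transitions, and every edge has length $\ge S_0=1$. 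Hence for $n>2\varepsilon$ every point of $B_C(\tfrac{n}{2},\varepsilon)$ sits on the left side and is not a shortcut vertex, so the shortcut vertices of $C_n$ cannot be $\varepsilon$-dense, for any $\varepsilon$ and any $k$.

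Non-hyperbolicity comes for free from the same squares: up to the uniformly bounded weights, $C_n$ is a geodesic quadrilateral of side $\sim n$ enclosing quadratic area, and the standard flat-grid argument (the midpoint of one side of the geodesic triangle on three of its corners lies at distance $\sim n/2$ from the union of the other two) forces $\delta(G)=\infty$. I expect the only genuinely delicate step to be the cross-shortcut estimate $4(n-1)+n$, since that is the single place where the precise $\bmod 8$ staggering of the rungs and the bounds $1\le S_{p+q}<2$ must be combined to certify that every left-to-right path is too long to shortcut; the remaining verifications are bookkeeping on the block structure.
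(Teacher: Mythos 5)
Your proposal reconstructs exactly the paper's example and argument: the same staggered mod-8 brick-wall blocks $G_n$ with edge lengths $S_{p+q}$ attached to a ray, the same $(36,18)$-path-chordality proof via an upper-right corner with $p$ maximal and a rung within $7$ rows, and the same non-density proof via the squares $C_n$ with the estimate $L(\sigma)\geq 4(n-1)+n>4n\geq d_{C}\big((0,q),(n,q')\big)$. The only divergence is your direct non-hyperbolicity sketch: the paper gets that claim for free as the contrapositive of Theorem \ref{Th: hyp-path} (not $\varepsilon$-densely $k$-path-chordal implies not hyperbolic), and your appeal to the ``standard flat-grid argument'' is slightly misleading since the interior of $G_n$ is sparse rather than grid-like --- geodesics between opposite corners hug the boundary --- although the midpoint-of-a-boundary-geodesic version of that argument does go through.
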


\section{A characterization of $\delta$-hyperbolic graphs}

\begin{definicion} A metric graph $(G,d)$ is \emph{$\varepsilon$-densely $(k,m)$-path-chordal on the triangles} if for every geodesic triangle $T$ with length $L(T)\geq k$, there exist shortcuts $\sigma_1,...,\sigma_r$ with $L(\sigma_i)\leq m$ and such that their associated shortcut vertices define an $\varepsilon$-dense subset in $(T,d_T)$.
\end{definicion}

\begin{teorema}\label{Th: carac} $G$ is $\delta$-hyperbolic if and only if $G$ is $\varepsilon$-densely $(k,m)$-path-chordal on the triangles.
\end{teorema}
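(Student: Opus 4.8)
The plan is to prove the two implications separately, with quite different ideas.

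For the direction ``$\varepsilon$-densely $(k,m)$-path-chordal on the triangles $\Rightarrow$ $\delta$-hyperbolic,'' I would reuse the argument of Theorem \ref{Th: path-hyp}, noting that its proof only ever applies the chordality hypothesis to geodesic triangles, so the weaker ``on the triangles'' hypothesis already suffices. Given a geodesic triangle $T=\{x,y,z\}$, if $L(T)<k$ then each side, being a geodesic, is no longer than the sum of the other two, so the longest side has length $<k/2$ and every point of $T$ lies within $k/4$ of a vertex; hence $T$ is $(k/4)$-thin. If $L(T)\ge k$ I apply the hypothesis to $T$ as a cycle (a degenerate triangle is reduced to its genuine subcycle by splitting off the shared initial segments, where the thin condition holds trivially). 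The one new observation needed is that a strict shortcut of a geodesic triangle cannot have both endpoints on a single side: if $a,b$ lay on a common geodesic side then the direct subarc gives $d_T(a,b)\le d(a,b)\le L(\sigma)<d_T(a,b)$, a contradiction. Consequently, for $p\in[xy]$ and a shortcut vertex $x_i$ with $d_T(x_i,p)<\varepsilon$, either $x_i$ already lies on $[xz]\cup[yz]$ or the opposite endpoint of its shortcut does, and in either case $d(p,[xz]\cup[yz])<\varepsilon+m$. This gives $\delta(G)\le\max\{k/4,\varepsilon+m\}$.

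The substantial direction is ``$\delta$-hyperbolic $\Rightarrow$ $\varepsilon$-densely $(k,m)$-path-chordal on the triangles,'' where, unlike in Theorem \ref{Th: hyp-path}, we must bound the shortcut lengths. I would set $m:=\delta$, take $\varepsilon$ and $k$ to be fixed multiples of $\delta$ (roughly $6\delta$ and $16\delta$), and fix a geodesic triangle $T=\{x,y,z\}$ that is a simple cycle with $L(T)\ge k$. The point is that, because $T$ is a genuine cycle, shortcuts are available everywhere except within $O(\delta)$ of the three vertices. Concretely, for a point $p$ on a side with $d_T(p,\{x,y,z\})>2\delta$, $\delta$-thinness gives $p'$ on another side with $d(p,p')\le\delta$; I take a geodesic $[pp']$ and extract from it, by a last-exit-from-$[xy]$ / first-entry-into-$[xz]\cup[yz]$ procedure, a strict shortcut $\tau$ with $L(\tau)\le\delta$ whose first endpoint $a$ lies on $p$'s side with $d_T(a,p)\le\delta$. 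Since then $d_T(a,\{x,y,z\})>\delta$ and the two endpoints of $\tau$ lie on different sides, both $T$-arcs joining them start by reaching a vertex and hence exceed $\delta$, so $L(\tau)\le\delta<d_T(a,b)$ and $\tau$ is genuinely a shortcut. Thus the produced shortcut vertices are $\delta$-dense in the ``good region'' $\{p:d_T(p,\{x,y,z\})>2\delta\}$.

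It remains to cover the three corner neighborhoods of the vertices, and here is where $L(T)\ge k$ enters: it forbids two sides at a vertex from both being short, since $|xy|,|xz|<4\delta$ would force $|yz|<8\delta$ and hence $L(T)<16\delta\le k$. Therefore each vertex has an adjacent side long enough to contain a good point at controlled distance from it, whose associated shortcut vertex then lies within $\varepsilon$ (in $d_T$) of every point of that corner; the same bound guarantees the good region is nonempty. Finally, since $T$ is compact and $G$ is locally finite, finitely many of these shortcuts suffice, yielding an $\varepsilon$-dense set of shortcut vertices. The main obstacle in this direction is precisely the extraction step: producing a \emph{strict} shortcut of length $\le\delta$ whose shortcut vertex is close to $p$ in the \emph{intrinsic} metric $d_T$ (not merely in $d$) while simultaneously verifying $L(\tau)<d_T(a,b)$. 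This is exactly the place where being a geodesic triangle, rather than an arbitrary cycle, is used essentially, and it is what rules out the pathology exhibited by the hyperbolic-approximation example following Theorem \ref{Th: path-hyp}.
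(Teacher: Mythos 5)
Your proposal is correct and follows essentially the same route as the paper: the first direction reuses the proof of Theorem \ref{Th: path-hyp} exactly as the paper does, and the second uses $\delta$-thinness to extract strict shortcuts of length at most $\delta$ for points on a side away from the triangle's vertices, then covers the corner regions via the constraint $L(T)\geq k$ (the paper takes $\varepsilon=3\delta$, $k=8\delta$, $m=\delta$, where you take roughly $6\delta$ and $16\delta$). Your extra care in checking that the extracted subpath really is a strict shortcut---which needs the buffer $d_T(p,\{x,y,z\})>2\delta$, since with only a $\delta$ buffer the arc of $T$ through the nearby vertex could be shorter than the extracted path---is precisely the step the paper asserts without justification, so your slightly worse constants buy a cleaner verification of the same argument.
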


\begin{proof} Suppose that $G$ is $\varepsilon$-densely $(k,m)$-path-chordal on the triangles. Let us see that $\delta(G)\leq \max\{\frac{k}{4},\varepsilon+m\}$. Consider any geodesic triangle $T=\{x,y,z\}$. If $L(T)<k$, it follows that every side of the triangle has length at most $\frac{k}{2}$. Therefore, the hyperbolic constant is at most $\frac{k}{4}$. Then, let $L(T)\geq k$ and let us prove that $T$ is $(\varepsilon+m)$-thin. Consider any point $p\in T$ and let us assume that $p\in [xy]$. If $d(p,x)< \varepsilon+m$ or $d(p,y)< \varepsilon+m$, we are done. Otherwise, there is a shortcut vertex $x_i$ such that $d(x_i,p)<\varepsilon$ and a shortcut $\sigma_i$, with $x_i\in \sigma_i$ and $L(\sigma_i)\leq m$. Since $[xy]$ is a geodesic, $\sigma_i$ does not connect two points in $[xy]$ and $d(p, [xz]\cup [yz])<\varepsilon+m$.

Suppose that $G$ is $\delta$-hyperbolic and consider any geodesic triangle $T=\{x,y,z\}$ with $L(T)\geq 8\delta$. Let $p\in T$ and let us assume, with no loss of generality, that $p\in [xy]$. Since $G$ is $\delta$-hyperbolic, $d(p, [xz]\cup [yz])<\delta$. If $d(px),d(py)>\delta$, then there is a path $\gamma$ with $L(\gamma)<\delta$ joining $p$ to $[xz]\cup [yz]$. In particular, there is a shortcut $\sigma\subset \gamma$ with $L(\sigma)\leq L(\gamma)<\delta$ joining some shortcut vertex $p'\in [xy]$ with $d(p,p')<\delta$ to $[xz]\cup [yz]$. Therefore, if $L([x,y])>2\delta$, for every point $q\in [x,y]$ there is a shortcut vertex in $B_T(q,2\delta)\cap [x,y]$ associated to a shortcut with length at most $\delta$. Since $L(T)\geq 8\delta$, there is at most one side of the triangle with length $\leq 2\delta$. Then, for every point $p$ in the triangle there is a shortcut vertex in $B_T(p,3\delta)$ associated to a shortcut with length at most $\delta$. Thus, it suffices to consider $\varepsilon=3\delta$,  $k=8\delta$ and $m=\delta$.
\end{proof}

\end{document}